\documentclass[12pt,reqno]{amsart}
 \usepackage[utf8]{inputenc}
 \usepackage[T1]{fontenc}
 \usepackage[usenames, dvipsnames]{color}
 \usepackage{ulem}
 
 \usepackage{dsfont, amsfonts, amsmath, amssymb,amscd, stmaryrd, latexsym, amsthm, dsfont}
 \usepackage[frenchb,english]{babel}
 \usepackage{enumerate}
 \usepackage{longtable}
 \usepackage{geometry}
 \usepackage{tikz}
 \usepackage{float}
 \usetikzlibrary{shapes,arrows}
 \usepackage{float}
 \usepackage{tabularx}
 \geometry{margin=3.5cm,head=0.2cm,headsep=6pt,foot=0.5cm}
 

 \newtheorem{prop}{Proposition}
 \newtheorem{theorem}{Theorem}

 \newtheorem{rmk}{Remark}
 \newtheorem{lm}{Lemma}


 \def\ZZ{\mathbb{Z}}

 \begin{document}
 	
 	\selectlanguage{english}
 	
 	\title[The rank of the $2$-class ...]{The rank of the $2$-class group of some fields with large degree}

 	\author[M. M. Chems-Eddin]{Mohamed Mahmoud CHEMS-EDDIN}
 	\address{Mohamed Mahmoud CHEMS-EDDIN: Mohammed First University, Mathematics Department, Sciences Faculty, Oujda, Morocco }
 	\email{2m.chemseddin@gmail.com}	


 	\subjclass[2010]{   	11R29; 11R23; 11R27; 11R04.}
 	\keywords{Cyclotomic $Z_2$-extension, $2$-rank, $2$-class group.}
 	
 	\maketitle

 	\begin{abstract}
 		Let $n\geq 3$ be an integer and  $d$   an odd square-free integer.  We shall compute the rank of the $2$-class group of $L_{n,d}:=\mathbb{Q}(\zeta_{2^n},\sqrt{d})$, when all the prime divisors of $d$ are congruent to $\pm 3\pmod 8$ or 
 		$9\pmod{16}$.
 	\end{abstract}

 	\section{\textbf{Introduction}}
 	The explicit computation of the rank of the $2$-class group of a given  number field $K$ is one  of  the difficult problems of algebraic number theory, especially for fields with large degree.
 	For many years ago, several  authors studied this problem for number fields of degree  $2$ or  $4$  (cf. \cite{kaplan76,thomasParry,mccall19972}). The methods used therein  are not enough to deal with the same problem for number
 	fields with large degree, although recently some papers studied this question for some number fields of degree $2^n$  (cf. \cite{LiouYangXuZhang,chemsZkhnin2}).
 	Using the cyclotomic units and some results of the theory of the  cyclotomic $\mathbb{Z}_2$-extension, we extend these methods to compute the rank of the $2$-class group of some fields of degree $2^n$ of the form
 	$L_{n,d}:=\mathbb{Q}(\zeta_{2^n},\sqrt{d})$, where $d$ is an  odd square-free integer and  $n\geq 3$ is a positive integer.
 	
 	Let  $\mathds{k}:=\mathbb{Q}(\sqrt{d}, \sqrt{-1})$, $\mathbb{Q}(\sqrt{-2}, \sqrt{d})$ or $\mathbb{Q}(\sqrt{-2}, \sqrt{-d})$. Then the cyclotomic  $\mathbb{Z}_2$-extension of $\mathds{k}$ is 
 	$$\mathds{k}(\sqrt{2}) \subset \mathds{k}(\sqrt{2+\sqrt{2}}) \subset \mathds{k}(\sqrt{2+\sqrt{2+\sqrt{2}}})\subset ...$$
 	which coincides with the   tower   $L_{3,d}\subset L_{4,d} \subset...\subset  L_{n,d} \subset ... $
 	
 	The present work is a continuation of our previous work  \cite{chemsZkhnin2}, in which we computed the rank of the $2$-class group of $ L_{n,d}$ when the prime divisors  of $d$  are
 	congruent to $3$ or $5\pmod 8$. Thus, we compute the rank of the $2$-class group of $ L_{n,d}$ when the prime divisors of $d$ are congruent to    $\pm 3\pmod 8$ or $9\pmod{16}$. Furthermore, we give the rank of the $2$-class group of  $ L_{n,d}$ in terms  of that of
 	$ L_{4,d}$, when the prime divisors of $d$ are congruent to   $\pm 3\pmod 8$ or $\pm 7\pmod{16}$.
 	\section*{Notations}
 	The next notations will be used for the rest of this article:
 	{ \footnotesize \begin{enumerate}[\rm$\bullet$]
 			\item $d$:  An odd square-free integer,
 			\item $n$: A positive integer $\geq 3$,
 			\item $K_n$: $\mathbb{Q}(\zeta_{2^n})$,
 			\item $K_n^+$: The maximal real subfield of  $K_n$,
 			\item $L_{n,d}$:  $K_n(\sqrt{d})$,
 			\item $\mathcal{N}$: The norm map of the extension     $L_{n,d}/K_n$,
 			\item $E_{k}$:   The unit group of a number field $k$,
 			\item 	$\mathcal O_{k}$:  The ring of integers of a number field $k$,
 			\item $ {Cl}(k)$:  The class group of a number field  $k$,
 			\item $ {Cl}_2(k)$:  The $2$-class group of a number field  $k$,
 			\item $rank_2(Cl(L_{n,d}))$:  The rank of the $2$-class group of $L_{n,d}$,
 			\item $\mathfrak p$: A prime ideal of $K_n$,
 			\item $\left( \frac{\alpha,d}{\mathfrak p}\right)$:  The quadratic  norm residue symbol 	for $L_{n,d}/K_n$,
 			\item $\left( \frac{\alpha}{ \mathfrak{p}}\right)$: The quadratic power residue symbol.

 	\end{enumerate}}

 	\section{ \textbf{The preliminary results }}
 	Let us  first collect some results that will be used in what follows.
 	\begin{lm}[\cite{fukuda}]\label{lm fukuda}
 		Let $K/k$ be a $\mathbb{Z}_2$-extension, $k_n$ its    n-th layer  and $n_0$  an integer such that any prime of $K$ which is ramified in $K/k$ is totally ramified in $K/k_{n_0}$.
 		If there exists an integer $n\geq n_0$ such that $rank_2(  {Cl}(k_n))= rank_2( {Cl}(k_{n+1}))$, then
 		$rank_2( {Cl}(k_{m}))= rank_2( {Cl}(k_{n}))$ for all $m\geq n$.
 	\end{lm}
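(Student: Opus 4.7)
The plan is to recast the tower $\{Cl_2(k_n)\}$ as a single module over the Iwasawa algebra and reduce the statement to Nakayama's lemma in characteristic $2$. Write $\Gamma = \mathrm{Gal}(K/k)$, fix a topological generator $\gamma$, and set $\Lambda = \mathbb{Z}_2[[\Gamma]] \simeq \mathbb{Z}_2[[T]]$ via $T = \gamma - 1$. I will form the projective limit $X := \varprojlim_n Cl_2(k_n)$ under the norm maps; by standard Iwasawa theory, $X$ is a finitely generated compact $\Lambda$-module.

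The first key step is to translate the ramification hypothesis into the Iwasawa-type isomorphism
$$Cl_2(k_n) \;\cong\; X / \omega_n X \qquad (n \geq n_0),$$
where $\omega_n = \gamma^{2^{n-n_0}} - 1$. Under the assumption that every prime of $K$ ramified in $K/k$ is totally ramified in $K/k_{n_0}$, the norm maps $Cl_2(k_m) \twoheadrightarrow Cl_2(k_n)$ are surjective for $m \geq n \geq n_0$, and $Cl_2(k_n)$ is identified with the $\Gamma_n$-coinvariants of $X$. This is the step I expect to be the most delicate, but it is classical (it amounts to unwinding genus theory plus the compatibility of the inverse limit with the ramification filtration).

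With this in hand, I reduce modulo $2$: set $Y = X/2X$, a finitely generated module over the complete local ring $\overline{\Lambda} = \mathbb{F}_2[[T]]$, which has maximal ideal $(T)$. In characteristic $2$ one has $\omega_n \equiv (1+T)^{2^{n-n_0}} - 1 \equiv T^{2^{n-n_0}} \pmod{2}$, so that
$$rank_2(Cl(k_n)) \;=\; \dim_{\mathbb{F}_2}\bigl(Y / T^{2^{n-n_0}} Y\bigr).$$

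Finally, the stabilization assumption $rank_2(Cl(k_n)) = rank_2(Cl(k_{n+1}))$ becomes the equality $T^{2^{n-n_0}} Y = T^{2^{n+1-n_0}} Y$. Setting $M = T^{2^{n-n_0}} Y$, this reads $M = T^{2^{n-n_0}} M \subseteq T \cdot M$, so $M = TM$. Since $M$ is finitely generated over the local ring $\mathbb{F}_2[[T]]$, Nakayama's lemma forces $M = 0$. Consequently $T^{2^{m-n_0}} Y = 0$ for every $m \geq n$, and $\dim_{\mathbb{F}_2}(Y / T^{2^{m-n_0}} Y) = \dim_{\mathbb{F}_2} Y$ is independent of $m \geq n$, giving the claimed stabilization. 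The only real obstacle is the first step, i.e.\ the identification $Cl_2(k_n) \cong X/\omega_n X$; once that structural input is granted, the rank-stabilization is a one-line Nakayama argument.
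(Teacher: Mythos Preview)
The paper does not give its own proof of this lemma; it is simply quoted from Fukuda \cite{fukuda}. Your argument is correct and is in fact essentially Fukuda's original proof: pass to the Iwasawa module $X=\varprojlim Cl_2(k_n)$, reduce modulo $2$ so that the relevant quotients are cut out by powers of $T$ in $\mathbb{F}_2[[T]]$, and conclude by Nakayama. One minor technical remark: the clean identification $Cl_2(k_n)\cong X/\omega_n X$ that you invoke is literally valid only when a single prime ramifies; with several totally ramified primes one has instead $Cl_2(k_n)\cong X/\nu_{n}Y_{0}$ for a certain submodule $Y_0\supseteq TX$ and $\nu_n=\omega_n/T$ (cf.\ \cite[Lemmas~13.14--13.15]{washington1997introduction}). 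This changes nothing in your scheme, since after reducing mod $2$ the filtration is still by increasing powers of $T$ applied to a fixed finitely generated $\mathbb{F}_2[[T]]$-submodule, and the same Nakayama step applies verbatim. So your outline is sound and coincides with the standard proof that the paper is citing.
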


 	\begin{lm}[\text{\cite[Lamma 8.1, Corollary 4.13]{washington1997introduction}}]\label{lm cyclo units}~\
 		\begin{enumerate}[\rm 1.]
 			\item The cyclotomic units of   $K_n^+$ are generated by  $-1$ and $$\xi_{k,n}=\zeta_{2^n}^{(1-k)/2}\frac{1-\zeta_{2^n}^k}{ 1-\zeta_{2^n}},$$ where $k$ is an odd integer such that  $1< k< 2^{n-1}$.
 			\item The cyclotomic units of $K_n$ are generated by $\zeta_{ 2^{n}}$ and the cyclotomic units of   $K_n^+$ .
 			
 			\item The Hasse's index $Q$ of $K_n$ equals 1.
 		\end{enumerate}
 		
 	\end{lm}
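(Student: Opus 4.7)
The plan is to follow the systematic treatment in \cite{washington1997introduction}, since all three items are quoted directly from there. Since they have rather different characters, I would handle each in turn, and the main obstacle would lie in item (3).

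For (1), I would begin from the standard definition: the cyclotomic units of $K_n$ form the intersection with $E_{K_n}$ of the multiplicative group generated by $\pm\zeta_{2^n}$ and the elements $1-\zeta_{2^n}^a$ for $1\le a<2^n$. The quotient $(1-\zeta_{2^n}^k)/(1-\zeta_{2^n})$ is a unit whenever $k$ is odd, as one checks from the factorisation of $X^{2^n}-1$. Multiplying by the root of unity $\zeta_{2^n}^{(1-k)/2}$ symmetrises this quotient under complex conjugation, so $\xi_{k,n}$ actually lies in $K_n^+$; intersecting the full cyclotomic unit group with $K_n^+$ then yields precisely the subgroup generated by $-1$ and the $\xi_{k,n}$. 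Finally, the identities $\xi_{-k,n}=-\xi_{k,n}$, $\xi_{k+2^n,n}=\xi_{k,n}$, and $\xi_{1,n}=1$ allow one to restrict the range of $k$ to odd integers with $1<k<2^{n-1}$ without loss.

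For (2), I would exploit that $K_n=K_n^+(\zeta_{2^n})$ is a quadratic extension, and rewrite each generator $1-\zeta_{2^n}^a$ of the full cyclotomic unit group, up to a power of $\zeta_{2^n}$, as a real cyclotomic unit $\xi_{k,n}$. Consequently, the full group of cyclotomic units of $K_n$ is generated by $\zeta_{2^n}$ together with those of $K_n^+$, which is exactly the claim.

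For (3), which is the crux, Hasse's index $Q=[E_{K_n}:W_{K_n}E_{K_n^+}]$ is a priori known to be $1$ or $2$, and \cite[Corollary 4.13]{washington1997introduction} establishes $Q=1$ for prime power cyclotomic fields. The delicate point is to verify that every unit $u\in E_{K_n}$ can be written, up to multiplication by a root of unity, as a real unit. The standard argument shows first that $u/\overline{u}\in W_{K_n}$ (since it has absolute value $1$ in every complex embedding), and then that this quotient is in fact a \emph{square} in $W_{K_n}$; extracting such a square root $\eta$ makes $\eta^{-1}u$ fixed by complex conjugation, hence real. This last extraction is where the work lies: it uses the structure of $W_{K_n}$ as a cyclic $2$-group of order $2^n$ together with the ramification of the unique prime above $2$ in $K_n/K_n^+$, and is the step where the specific prime power (rather than composite) nature of the conductor is essential.
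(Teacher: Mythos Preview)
Your sketch is correct and follows the standard arguments in \cite{washington1997introduction}, but note that the paper itself gives \emph{no} proof of this lemma: it is simply quoted from Washington (Lemma~8.1 and Corollary~4.13) and used as a black box. There is therefore nothing in the paper to compare your argument against; your proposal is not so much an alternative route as a fleshing-out of the cited reference, and the outline you give for each item---symmetrising $(1-\zeta_{2^n}^k)/(1-\zeta_{2^n})$ to land in $K_n^+$, reducing the $K_n$ generators to $\zeta_{2^n}$ times real ones, and the $u/\bar u\in W_{K_n}^2$ argument exploiting total ramification above~$2$---matches Washington's treatment.
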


 	\begin{lm}[\text{\cite{Gr}}]\label{ambiguous class number formula} Let $K/k$ be a quadratic extension. If the class number of $k$ is odd, then the rank of the $2$-class group of $K$ is given by
 		$$rank_2({Cl}(K))=t-1-e,$$
 		where  $t$ is the number of  ramified primes (finite or infinite) in the extension  $K/k$ and $e$ is  defined by   $2^{e}=[E_{k}:E_{k} \cap N_{K/k}(K^*)]$.
 	\end{lm}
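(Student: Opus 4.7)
The plan is to derive this from Chevalley's classical ambiguous class number formula. For a cyclic extension $K/k$ of prime degree with Galois group $G=\langle\sigma\rangle$, Chevalley's formula reads
$$|Cl(K)^G| \;=\; |Cl(k)|\cdot\frac{\prod_v e_v}{[K:k]\cdot[E_k:E_k\cap N_{K/k}(K^*)]},$$
where $v$ runs over all places (finite and archimedean) of $k$ and $e_v$ is the local ramification index. For a quadratic extension every ramified place contributes a factor $2$ while the unramified ones contribute $1$, so $\prod_v e_v=2^t$ and
$$|Cl(K)^G| \;=\; |Cl(k)|\cdot 2^{t-1-e}.$$
This formula is the main non-trivial input; the standard proof goes through the explicit description of the ambiguous class group as (ambiguous ideals)/(principal ambiguous ideals), and is where the local and global contributions enter.

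Granting this, the hypothesis that $|Cl(k)|$ is odd lets one take $2$-parts directly to obtain $|Cl_2(K)^G|=2^{t-1-e}$. It then remains to identify $rank_2(Cl(K))$ with $\log_2|Cl_2(K)^G|$. The key observation is that $h(k)$ being odd forces $\sigma$ to act as inversion on $Cl_2(K)$: for every class $\bar{\mathfrak{I}}\in Cl_2(K)$, the norm $N_{K/k}(\bar{\mathfrak{I}})$ lies in $Cl(k)$ with $2$-power order, hence is trivial since $|Cl(k)|$ is odd. Pulling back via the natural map $Cl(k)\to Cl(K)$ yields $\bar{\mathfrak{I}}\cdot\bar{\mathfrak{I}}^\sigma=1$ in $Cl_2(K)$, so $\bar{\mathfrak{I}}^\sigma=\bar{\mathfrak{I}}^{-1}$.

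Consequently $Cl_2(K)^G=\{\bar{\mathfrak{I}}:\bar{\mathfrak{I}}^2=1\}=Cl_2(K)[2]$ is the full $2$-torsion subgroup of $Cl_2(K)$. For any finite abelian $2$-group $A$ one has the duality $rank_2(A)=\dim_{\mathbb{F}_2}A[2]$, so
$$rank_2(Cl(K)) \;=\; \log_2|Cl_2(K)[2]| \;=\; \log_2|Cl_2(K)^G| \;=\; t-1-e,$$
as desired. The main obstacle is thus Chevalley's formula itself; once it is in hand, the extraction of the $2$-rank is essentially formal and is driven by the rigidity of the $G$-action on $Cl_2(K)$ imposed by the odd-class-number hypothesis.
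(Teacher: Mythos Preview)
Your derivation is correct. The paper does not supply its own proof of this lemma: it is quoted as a known result from Gras \cite{Gr}, with no argument given. Your approach---starting from Chevalley's ambiguous class number formula, taking $2$-parts using the oddness of $h(k)$, and then observing that the Galois action on $Cl_2(K)$ is by inversion so that $Cl_2(K)^G=Cl_2(K)[2]$---is exactly the standard route and is essentially how the formula is obtained in the cited literature (Gras treats the general cyclic degree-$\ell$ case, of which this is the specialization $\ell=2$). One small point worth making explicit in your write-up: when you pass from $|Cl(K)^G|$ to $|Cl_2(K)^G|$, you are implicitly using the canonical splitting $Cl(K)^G=Cl_2(K)^G\times Cl_{2'}(K)^G$ together with the fact that $|Cl_{2'}(K)^G|$ is odd; this is routine but deserves a word.
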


 	\begin{rmk}\label{Rmk 1}
 		Note that a unit $u$ of $K_n$ is a norm  in $L_{n,d}/K_n$ if and only if $\left( \frac{u,d}{\mathfrak p}\right)=1$, for all prime ideal   $\mathfrak p$ of $K_n$ ramified in $L_{n,d}$.
 	\end{rmk}


 	\hspace{-0.4cm}\begin{minipage}{10.5cm}
 		Next, we need to characterize ideals of  $K_n=\mathbb{Q}(\zeta_{2^n})$ that ramify in $L_{n, d}=K_n(\sqrt{d})$.  Let $n\geq 3$ and  $d$ be an  odd square-free integer, then $d$ is congruent to $1$ or $3 \pmod 4$. So  $2$     is unramified in either $\mathbb Q(\sqrt{ d})$ or  $\mathbb Q(\sqrt {-d})$. Thus, the ramification index  of $2$ in $L_{n,d}$ is  strictly inferior to $2^n$. As  $2$ is totally ramified in $K_n:=\mathbb Q(\zeta_{2^n})$, so the prime ideal
 		of $K_n$ lying over $2$ is unramified in $L_{n,d}$, as otherwise  the ramification index    of $2$ in $L_{n,d}$ will be $2^n$, which  is absurd. Hence  we prove the following result:
 	\end{minipage}
 	\fbox{\begin{minipage}{3.6cm}
 			{	\footnotesize
 				\hspace{0.5cm}\begin{tikzpicture} [scale=1.2]
 				\node (Q)  at (0,0) {$\mathbb Q$};
 				\node (d)  at (-1,1) {$\mathbb Q(\sqrt d)$};
 				\node (-d)  at (1,1) {$\mathbb Q(\sqrt {-d})$};
 				\node (zeta)  at (0,1.5) {$\mathbb Q(\zeta_{2^n})$};
 				\node (zeta d)  at (0,2.5) {$\mathbb Q(\zeta_{2^n},\sqrt d)$};
 				\draw (Q) --(d)  node[scale=0.4,midway,below right]{};
 				\draw (Q) --(-d)  node[scale=0.4,midway,below right]{};
 				\draw (Q) --(zeta)  node[scale=0.4,midway,below right]{};
 				\draw (Q) --(zeta)  node[scale=0.4,midway,below right]{};
 				\draw (zeta) --(zeta d)  node[scale=0.4,midway,below right]{};
 				\draw (d) --(zeta d)  node[scale=0.4,midway,below right]{};
 				\draw (-d) --(zeta d)  node[scale=0.4,midway,below right]{};
 				\end{tikzpicture}}
 	\end{minipage}}
 	
 	\begin{lm}\label{lm ramified primes of L/K}
 		Let $d$ be an odd square-free integer. Then a prime ideal $\mathfrak p$ of $K_n$ is ramified in  $L_{n,d}/K_n$ if and only if it divides
 		$d.$
 	\end{lm}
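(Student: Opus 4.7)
The plan is to split the primes of $K_n$ into those lying above $2$ and those lying above an odd rational prime, and treat each case separately using standard facts about ramification in Kummer quadratic extensions.

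First I would dispose of the prime above $2$. The paragraph preceding the statement already establishes that the unique prime of $K_n$ above $2$ is unramified in $L_{n,d}/K_n$, since otherwise the ramification index of $2$ in $L_{n,d}/\mathbb{Q}$ would be $2^n$, contradicting the fact that $2$ is unramified in either $\mathbb{Q}(\sqrt{d})$ or $\mathbb{Q}(\sqrt{-d})$. On the other hand, the prime of $K_n$ above $2$ does not divide $d$, since $d$ is odd. So the equivalence of the statement holds trivially at the prime above $2$.

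Next I would handle the primes above odd rationals. The key remark is that the discriminant of $K_n=\mathbb{Q}(\zeta_{2^n})$ over $\mathbb{Q}$ is a power of $2$, so every odd rational prime $p$ is unramified in $K_n/\mathbb{Q}$. Consequently, for any prime $\mathfrak{p}$ of $K_n$ above an odd prime $p$, the ramification index $e(\mathfrak{p}/p)$ equals $1$, hence $v_{\mathfrak{p}}(d)=v_p(d)\in\{0,1\}$, because $d$ is square-free. Since $\mathfrak{p}$ has odd residue characteristic, the standard Kummer-type criterion for ramification in the quadratic extension $K_n(\sqrt{d})/K_n$ asserts that $\mathfrak{p}$ ramifies in $L_{n,d}/K_n$ if and only if $v_{\mathfrak{p}}(d)$ is odd. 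Combining, $\mathfrak{p}$ ramifies if and only if $v_p(d)=1$, i.e.\ if and only if $p$ divides $d$, i.e.\ if and only if $\mathfrak{p}$ divides $d$.

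The two cases together give the desired equivalence. There is no serious obstacle: everything reduces to the fact that $K_n/\mathbb{Q}$ is unramified outside $2$ plus the well-known local behaviour of $\mathfrak{p}$-adic square roots at odd residue characteristic. The only delicate point is the prime above $2$, but this has already been handled by the author in the paragraph immediately preceding the statement.
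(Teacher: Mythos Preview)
Your proposal is correct and follows essentially the same approach as the paper: the paper's argument is in fact just the paragraph preceding the lemma, which handles only the prime above $2$ exactly as you describe and tacitly leaves the odd-residue-characteristic primes to the standard Kummer/discriminant criterion. You have simply made that implicit step explicit, which is an improvement in clarity rather than a change of strategy.
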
		
 	
 	\begin{prop}\label{Number of primes dividing d}
 		Let $n\geq5$ and    $p$ be a rational prime. Then $p$ decomposes into    $4$ primes of $K_n$  if and only if   $p\equiv 7\text{ or } 9 \pmod {16}$.
 	\end{prop}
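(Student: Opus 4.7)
\medskip\noindent\textbf{Proof plan.} The plan is to translate the splitting behaviour of $p$ in $K_n$ into an order computation in $(\ZZ/2^n\ZZ)^*$, and then to read off the answer from the standard structure of that group. If $p=2$, then $2$ is totally ramified in $K_n$ and lies under a single prime ideal, while $2\not\equiv 7,9\pmod{16}$; both sides of the asserted equivalence thus fail, and we may restrict to odd $p$, which is unramified in $K_n/\QQ$.

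Since $K_n/\QQ$ is abelian of degree $2^{n-1}$ with $\operatorname{Gal}(K_n/\QQ)\cong(\ZZ/2^n\ZZ)^*$ via $\sigma\leftrightarrow a$ when $\sigma(\zeta_{2^n})=\zeta_{2^n}^{a}$, the Frobenius at an odd prime $p$ is the class of $p$. Hence the residue degree of any prime of $K_n$ above $p$ equals $f=\operatorname{ord}_{2^n}(p)$ and the number of such primes equals $g=2^{n-1}/f$. Therefore, the proposition reduces to the purely arithmetical equivalence
$$\operatorname{ord}_{2^n}(p)=2^{n-3}\ \Longleftrightarrow\ p\equiv 7\text{ or }9\pmod{16}.$$

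To prove this, I would invoke the classical decomposition $(\ZZ/2^n\ZZ)^*=\langle-1\rangle\times\langle 5\rangle$ (valid for $n\ge 3$), with $\langle 5\rangle$ of order $2^{n-2}$, and write uniquely $p\equiv(-1)^{a}5^{b}\pmod{2^n}$ with $a\in\{0,1\}$ and $0\le b<2^{n-2}$. Since $\operatorname{ord}(5^{b})=2^{n-2-v_2(b)}$ and the two factors lie in complementary direct summands, $\operatorname{ord}_{2^n}(p)=\operatorname{lcm}\!\bigl(2^{a},\,2^{n-2-v_2(b)}\bigr)$. Using $n\ge 5$ (so that $2^{n-3}\ge 4>2$), this equals $2^{n-3}$ exactly when $v_2(b)=1$, that is, $b\equiv 2\pmod 4$, irrespective of the value of $a$.

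Finally, reducing modulo $16$ the powers of $5$ cycle as $1,5,9,13$ for $b\equiv 0,1,2,3\pmod 4$, so the condition $b\equiv 2\pmod 4$ is equivalent to $5^{b}\equiv 9\pmod{16}$. Combined with the two choices of $a$, this yields $p\equiv 9$ or $p\equiv -9\equiv 7\pmod{16}$. The converse is immediate: the value of $p\bmod 4$ fixes $a$ (since $5\equiv 1$ and $-1\equiv 3\pmod 4$), and then reduction mod $16$ forces $b\equiv 2\pmod 4$, giving $\operatorname{ord}_{2^n}(p)=2^{n-3}$. The only delicate point is the careful bookkeeping inside the direct product decomposition; no deeper tool than the structure theorem for $(\ZZ/2^n\ZZ)^*$ is needed, which is why the hypothesis $n\ge 5$ is both natural and indispensable (it ensures that the target order $2^{n-3}$ strictly exceeds $2$).
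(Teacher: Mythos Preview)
Your argument is correct. The only spot that could use a word is the edge case $b=0$, where $v_2(b)$ is undefined; but then $p\equiv\pm1\pmod{2^n}$, so $\operatorname{ord}_{2^n}(p)\le 2<2^{n-3}$ and $p\equiv\pm1\pmod{16}$, hence both sides of the equivalence fail and the case is harmless.

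Your route differs from the paper's. The paper does not compute $\operatorname{ord}_{2^n}(p)$ directly; instead it uses the $\ZZ_2$-extension structure of the tower $K_3\subset K_4\subset\cdots$: since $\mathrm{Gal}(K_\infty/K_3)\cong\ZZ_2$, the decomposition field of any prime above $p$ is one of the layers, so once a prime of $K_m$ is inert in $K_{m+1}$ it stays inert in every higher layer. The proof then reduces to the finite check that $p$ lies under exactly four primes in both $K_4$ and $K_5$ precisely when $p\equiv 7$ or $9\pmod{16}$. Your computation via the structure theorem $(\ZZ/2^n\ZZ)^*=\langle-1\rangle\times\langle 5\rangle$ is more elementary and self-contained, and it makes explicit why $n\ge 5$ is needed (so that $2^{n-3}>2$ and the $\langle-1\rangle$-factor cannot by itself produce the target order). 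The paper's approach, on the other hand, is the one that generalises cleanly within the Iwasawa-theoretic framework used elsewhere in the article (compare Lemma~\ref{lm fukuda}).
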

  \begin{proof}
 	Let  $K_\infty$    denote  the    cyclotomic $\mathbb{Z}_2$-extension of $K $.
 	  Note that  $\mathrm{Gal}(K_\infty:K)\simeq \ZZ_2$. Hence the decomposition field of a prime above $p$ must be some $K_n$. Therefore if a prime $\mathfrak{p}$ of $K_n$ is inert in $K_{n+1}$, then 
 	  $\mathfrak{p}$ is inert in  $K_\infty/K_n$. Then Proposition 1 is from the easily verified fact that a prime $p$ decomposes into $4$ primes in $K_4=\mathbb{Q}(\zeta_{16})$ and $K_5=\mathbb{Q}(\zeta_{32})$ if and only if $p\equiv 7,9 \pmod {16}$.
  \end{proof}

 	\begin{rmk}\label{Rmq decoposition in K4}Let   $p\equiv 7\text{ or } 9 \pmod {16}$ be a prime.  We have
 		\begin{enumerate}[\rm 1.]
 			\item If $p\equiv  9 \pmod {16}$, then $p$ decomposes into product  of four prime ideals  in $K_3$ and $K_4$.
 			\item If $p\equiv  7 \pmod {16}$, then $p$ decomposes into product  of  two prime ideals in $K_3$  and into product  of four prime ideals in  $K_4$.
 		\end{enumerate}
 	\end{rmk}

 	\begin{prop}[\cite{chemsZkhnin2}]\label{prop deco to 2}
 		Let $m\geq4$ and    $p$ be a rational prime. Then $p$ decomposes into the product of  $2$ primes of $K_m$  if and only if   $p\equiv3$ or $5 \pmod 8$.
 	\end{prop}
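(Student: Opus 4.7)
My plan is to repeat the strategy of the proof of Proposition \ref{Number of primes dividing d}. Writing $K_\infty=\bigcup_{n} K_n$, I would first note that $\mathrm{Gal}(K_\infty/K_2)\simeq 1+4\ZZ_2\simeq \ZZ_2$, so $K_\infty/K_2$ is a $\ZZ_2$-extension whose intermediate fields are precisely $K_n$ for $n\geq 2$. As in the earlier proof, the decomposition field in $K_\infty$ of a prime above $p$ must then be some $K_n$, so once a prime $\mathfrak{p}$ of $K_n$ is inert in $K_{n+1}$ it stays inert in every $K_k$ with $k\geq n$.

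Next I would reduce the proposition to a direct verification at the two bottom layers $K_4$ and $K_5$. Denoting by $f_m$ the residue degree of a prime of $K_m$ above $p$, a decomposition into exactly two primes of $K_m$ corresponds to $f_m=2^{m-2}$, i.e.\ to the order of $p$ in $(\ZZ/2^m\ZZ)^\times$ being $2^{m-2}$. The group $(\ZZ/16\ZZ)^\times\simeq\ZZ/2\ZZ\times\ZZ/4\ZZ$ has exponent $4$, and its elements of order $4$ are exactly $\{3,5,11,13\}$, so $f_4=4$ iff $p\equiv\pm 3\pmod 8$; an analogous enumeration in $(\ZZ/32\ZZ)^\times\simeq\ZZ/2\ZZ\times\ZZ/8\ZZ$ yields $f_5=8$ under the same residue condition.

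The proposition then follows in two short steps. For the ``if'' direction, if $p\equiv 3$ or $5\pmod 8$, then $p$ decomposes into two primes in both $K_4$ and $K_5$, so every prime of $K_4$ above $p$ is inert in $K_5$ and therefore inert throughout $K_\infty/K_4$, producing exactly two primes in each $K_m$ with $m\geq 4$. For the converse, the number $g_m$ of primes of $K_m$ above $p$ is non-decreasing in $m$ (each such prime either stays inert in $K_{m+1}$ or splits into two), and $g_4\geq 2$ holds unconditionally because $(\ZZ/16\ZZ)^\times$ has exponent $4$; hence $g_m=2$ for any $m\geq 4$ forces $g_4=2$, and the direct check at the bottom layer pins down the residue class of $p$.

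I expect the only step requiring actual computation to be the check at the layer $K_5$: without it, the monotonicity of $g_m$ alone cannot exclude a prime of $K_4$ splitting further in $K_5$, and the persistence of the splitting type up the tower would break down. Everything else is routine bookkeeping inside the explicit cyclic decomposition of $(\ZZ/2^m\ZZ)^\times$.
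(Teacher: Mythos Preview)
The paper does not actually prove this proposition; it is quoted from \cite{chemsZkhnin2} without argument. Your approach is correct and is precisely the method the paper uses for the companion Proposition~\ref{Number of primes dividing d}: reduce to a direct check of the splitting in $K_4$ and $K_5$ via the order of $p$ in $(\ZZ/2^m\ZZ)^\times$, then use the $\ZZ_2$-structure of the tower to propagate inertness upward.
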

 	
 	Now we shall   do some computations:
 	
 	\begin{lm}[\cite{chemsZkhnin2}]\label{lm1 computations}
 		Let $n\geq 3$ be a positive integer  and $p$ be a prime number. Let 
 		$\mathfrak p_{K_{n}}$ denote  a prime ideal of $K_n$ above $p$. We have
 		
 		\begin{enumerate}[\rm 1.]

 			\item If    $  p\equiv 5  \pmod{8}$.  Then
 			$$\left( \frac{\zeta_{2^n},p}{\mathfrak p_{K_{n}}}\right)=-1 \text{ and }\left( \frac{\xi_{k,n},p}{\mathfrak p_{K_{n}}}\right)=1.$$
 			\item If  $p\equiv 3  \pmod{8}$. Then
 			$$\left( \frac{\zeta_{2^n},p}{\mathfrak p_{K_{n}}}\right)=-1 \text{ and }\left( \frac{\xi_{k,n},p}{\mathfrak p_{K_{n}}}\right)=\left\{  \begin{array}{ccc}
 			-1, &\text{ if } k \equiv \pm 3 \pmod 8& \\
 			1,& \text{ elsewhere. } &
 			\end{array} \right.$$
 			
 		\end{enumerate}
 	\end{lm}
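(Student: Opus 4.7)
The plan is to reduce the norm residue symbol to an ordinary Legendre symbol in the residue field and then carry out two rather different computations, one for $\zeta_{2^n}$ and one for $\xi_{k,n}$. Since $2$ is the only prime of $\mathbb{Q}$ that ramifies in $K_n$, the prime $\mathfrak{p}$ above $p$ is unramified with $v_{\mathfrak{p}}(p)=1$. Proposition~\ref{prop deco to 2} (together with direct inspection at $n=3$) shows that $p$ splits into exactly two primes of $K_n$ when $p\equiv\pm 3\pmod 8$, so the residue degree is $f=2^{n-2}$. Because $p$ is a uniformizer at $\mathfrak{p}$ and $\zeta_{2^n},\xi_{k,n}$ are $\mathfrak{p}$-units, the tame formula for unramified Hilbert symbols gives
\begin{equation*}
\left(\frac{u,p}{\mathfrak{p}}\right)=\bar u^{(p^{f}-1)/2}\in\{\pm 1\},
\end{equation*}
where $\bar u$ denotes the image of $u$ in the residue field $\mathbb{F}_{p^{f}}$.

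For $u=\zeta_{2^n}$ I would apply the lifting-the-exponent lemma at the prime $2$: $v_{2}(p^{2^{n-2}}-1)=v_{2}(p^2-1)+n-3$ for $n\geq 3$. Since $v_{2}(p^2-1)=3$ whenever $p\equiv\pm 3\pmod 8$, one obtains $v_{2}(p^f-1)=n$, so $(p^f-1)/2=2^{n-1}m$ with $m$ odd. As $\bar\zeta_{2^n}$ has order exactly $2^n$ with $\bar\zeta_{2^n}^{2^{n-1}}=-1$, this gives $\bar\zeta_{2^n}^{(p^f-1)/2}=(-1)^{m}=-1$, which is the first claim in both cases.

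For $u=\xi_{k,n}\in K_n^{+}$ the key structural input is that $p$ is inert in $K_n^{+}/\mathbb{Q}$. Indeed, when $p\equiv\pm 3\pmod 8$ the class of $p$ in $(\mathbb{Z}/2^n\mathbb{Z})^{*}\simeq\langle -1\rangle\times\langle 5\rangle$ has order $2^{n-2}$ and generates a cyclic subgroup not containing $-1$, so complex conjugation swaps the two primes of $K_n$ above $p$; their common restriction $\mathfrak{q}:=\mathfrak{p}\cap K_n^{+}$ is then the unique prime of $K_n^{+}$ above $p$ and has residue degree $f=[K_n^{+}:\mathbb{Q}]$. Inertness gives $N_{\mathbb{F}_{p^f}/\mathbb{F}_p}(\bar\xi_{k,n})\equiv N_{K_n^{+}/\mathbb{Q}}(\xi_{k,n})\pmod p$, and combining with the general identity $\alpha^{(p^f-1)/2}=N_{\mathbb{F}_{p^f}/\mathbb{F}_p}(\alpha)^{(p-1)/2}$ reduces the Hilbert symbol to
\begin{equation*}
\left(\frac{\xi_{k,n},p}{\mathfrak{p}}\right)=\left(\frac{N_{K_n^{+}/\mathbb{Q}}(\xi_{k,n})}{p}\right).
\end{equation*}

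It then remains to show $N_{K_n^{+}/\mathbb{Q}}(\xi_{k,n})=+1$ when $k\equiv\pm 1\pmod 8$ and $-1$ when $k\equiv\pm 3\pmod 8$. Using the identity $\xi_{k,n}=\sin(\pi k/2^n)/\sin(\pi/2^n)$ (obtained from $1-e^{i\theta}=-2i\sin(\theta/2)e^{i\theta/2}$), the Galois conjugates on $K_n^{+}$ are $\sin(\pi ka/2^n)/\sin(\pi a/2^n)$ for $a\in\{1,3,\ldots,2^{n-1}-1\}$. Since $N_{K_n/\mathbb{Q}}(\xi_{k,n})=1$, the norm has absolute value $1$, and its sign equals $(-1)^{N_k}$, where $N_k=\#\{a\in\{1,3,\ldots,2^{n-1}-1\}:\ ak\bmod 2^{n+1}\in(2^n,2^{n+1})\}$. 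A short combinatorial argument, pairing up residues mod $2^{n+1}$ and tracking how the bijection $a\mapsto ak$ acts on the two halves, shows that $N_k$ is even precisely when $k\equiv\pm 1\pmod 8$. Combining with $\left(\frac{-1}{p}\right)=+1$ for $p\equiv 5\pmod 8$ and $\left(\frac{-1}{p}\right)=-1$ for $p\equiv 3\pmod 8$ yields the stated formulas. The main obstacle is this parity computation for $N_k$; the decomposition of $p$ in $K_n^{+}$ and the Hilbert-to-Legendre reduction are routine.
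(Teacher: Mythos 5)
Your proposal is correct, but it takes a genuinely different route from the source. This lemma is imported from \cite{chemsZkhnin2}, and the method used there --- the same one visible in this paper's proof of Lemma \ref{lm compo equiv 9 mod 16} --- is to push the norm residue symbol down the cyclotomic tower via $N_{K_m/K_{m-1}}(\zeta_{2^m})=-\zeta_{2^{m-1}}$ and $N_{K_m/K_{m-1}}(1-\zeta_{2^m}^k)=1-\zeta_{2^{m-1}}^k$ (legitimate here because the primes above $p\equiv\pm3\pmod 8$ are inert in $K_n/K_3$), until one is left with a handful of symbols in $\mathbb{Q}(\zeta_8)$ involving $\zeta_8$ and $\varepsilon_2=1+\sqrt{2}$, which are evaluated in \cite{chemsZkhnin1}. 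You instead work directly in the residue field: the tame formula $\left(\frac{u,p}{\mathfrak p}\right)=\bar u^{(p^f-1)/2}$, the valuation $v_2(p^{2^{n-2}}-1)=n$ for the root of unity, and, for the real unit, the observation that $p$ is inert in $K_n^+$ (correct: $-1\notin\langle p\rangle\subset(\ZZ/2^n\ZZ)^*$), which reduces everything to $\left(\frac{N_{K_n^+/\mathbb{Q}}(\xi_{k,n})}{p}\right)$. Your approach is uniform in $n$ and self-contained, at the price of one combinatorial fact; the paper's approach trades that for explicit level-$3$ computations borrowed from \cite{chemsZkhnin1}. The one step you only sketch --- the parity of $N_k$, i.e.\ the sign of $N_{K_n^+/\mathbb{Q}}(\xi_{k,n})$ --- is the entire content of the case $p\equiv3\pmod 8$ (for $p\equiv5\pmod8$ it is irrelevant since $\left(\frac{-1}{p}\right)=1$, as you note), and it does hold; but rather than the residue count, the cleanest way to discharge it is the norm relation
\begin{equation*}
N_{K_m^+/K_{m-1}^+}(\xi_{k,m})=(-1)^{(k-1)/2}\,\xi_{k,m-1},
\end{equation*}
whose ambient sign is absorbed by the next (even-degree) relative norm, so that $N_{K_n^+/\mathbb{Q}}(\xi_{k,n})=N_{\mathbb{Q}(\sqrt2)/\mathbb{Q}}(\xi_{k\bmod 8,\,3})$, which equals $N(1+\sqrt2)=-1$ when $k\equiv\pm3\pmod 8$ and $N(\pm1)=1$ when $k\equiv\pm1\pmod 8$. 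With that supplied, your proof is complete.
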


 	\begin{lm}\label{lm compo equiv 9 mod 16}
 		Let $n\geq 3$ be an integer and  $p$  a  prime  congruent     to $9\pmod{16}$, $\mathfrak p_{K_{n}}$ a prime ideal of $K_n$ dividing $p$.	
 		$$\left( \frac{\zeta_{2^n},p}{\mathfrak p_{K_{n}}}\right)=-1\text{ and }\left( \frac{\xi_{k,n},p}{\mathfrak p_{K_{n}}}\right)= \left\{  \begin{array}{cl}
 		-\left(\frac{2}{p}\right)_4,&\text{ if } k\equiv \pm3 \pmod 8 \\
 		1,&\text{elsewhere}. \\
 		\end{array} \right.$$
 	\end{lm}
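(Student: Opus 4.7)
Since $p$ is odd it is unramified in $K_n/\mathbb Q$, so a uniformizer at $\mathfrak p_{K_n}$; thus for any unit $u$ of $K_n$,
\[
\left(\frac{u,p}{\mathfrak p_{K_n}}\right)\equiv u^{(N\mathfrak p_{K_n}-1)/2}\pmod{\mathfrak p_{K_n}}.
\]
By Proposition~\ref{Number of primes dividing d} (together with Remark~\ref{Rmq decoposition in K4}), $p$ has residue degree $2^{n-3}$ in $K_n$, so $N\mathfrak p_{K_n}=p^{2^{n-3}}$; since $p\equiv 1\pmod 8$ a short $2$-adic estimate gives $v_2(p^{2^{n-3}}-1)=n$. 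Writing $(N\mathfrak p_{K_n}-1)/2=2^{n-1}m$ with $m$ odd, the choice $u=\zeta_{2^n}$ gives $\zeta_{2^n}^{2^{n-1}m}=(-1)^m=-1$, which settles the first formula.

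For $u=\xi_{k,n}$ I would descend to $K_3$ using the projection formula for Hilbert symbols. Because $p\equiv 1\pmod 8$, $p$ splits completely in $K_3$, so the residue field of $\mathfrak p_{K_3}:=\mathfrak p_{K_n}\cap K_3$ is $\mathbb F_p$; and since $p$ has exactly $4$ primes in both $K_3$ and $K_n$, $\mathfrak p_{K_3}$ is inert in $K_n/K_3$. Hence local and global norms agree, so
\[
\left(\frac{\xi_{k,n},p}{\mathfrak p_{K_n}}\right)=\left(\frac{N_{K_n/K_3}(\xi_{k,n}),p}{\mathfrak p_{K_3}}\right).
\]
The identity $N_{K_m/K_{m-1}}(1-\zeta_{2^m}^j)=1-\zeta_{2^{m-1}}^j$ for odd $j$ yields by induction $N_{K_n/K_3}(\xi_{k,n})=(-1)^{(n-3)(1-k)/2}\xi_{k,3}$, hence (as $-1\in K_3^{*2}$) $N_{K_n/K_3}(\xi_{k,n})\equiv\xi_{k,3}\pmod{K_3^{*2}}$. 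The symmetries $\xi_{k+8,3}=-\xi_{k,3}$ and $\xi_{-k,3}=-\xi_{k,3}$, together with the direct computations $\xi_{1,3}=1$ and $\xi_{3,3}=\zeta_8^{-1}+1+\zeta_8=1+\sqrt 2$, show that, modulo $K_3^{*2}$, $\xi_{k,3}\equiv 1$ if $k\equiv\pm 1\pmod 8$ and $\xi_{k,3}\equiv 1+\sqrt 2$ if $k\equiv\pm 3\pmod 8$.

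The first case gives the symbol $1$ immediately. In the second case the symbol equals $(1+a)^{(p-1)/2}\bmod p$, where $a\in\mathbb F_p$ satisfies $a^2=2$. Exploiting $2(1+a)=a(2+a)$ and $\left(\frac{2}{p}\right)=1$, this factors as $\left(\frac{a}{p}\right)\left(\frac{2+a}{p}\right)=\left(\frac{2}{p}\right)_4\cdot\left(\frac{2+a}{p}\right)$, using $a^{(p-1)/2}=2^{(p-1)/4}=\left(\frac{2}{p}\right)_4$. It remains to prove $\left(\frac{2+a}{p}\right)=-1$ for $p\equiv 9\pmod{16}$: since $K_4^+=K_3^+(\sqrt{2+\sqrt 2})$ and $\operatorname{Gal}(K_4^+/\mathbb Q)\cong(\mathbb Z/16)^*/\{\pm 1\}$, the image of $p$ has order $2$ in this group (as $9\not\equiv\pm 1\pmod{16}$), so $p$ has residue degree $2$ in $K_4^+$; consequently $\mathfrak p_{K_3}\cap K_3^+$ is inert in $K_4^+/K_3^+$, which is exactly the statement that $2+\sqrt 2$ is a non-square modulo $\mathfrak p_{K_3}\cap K_3^+$. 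The main obstacle is this last splitting-theoretic step, which is what pins the sign $-\left(\frac{2}{p}\right)_4$ precisely to the congruence $p\equiv 9\pmod{16}$; all the earlier steps are bookkeeping modulo squares.
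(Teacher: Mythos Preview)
Your argument is correct and follows essentially the same route as the paper: both reduce the symbol to $K_3$ via the norm-compatibility of the Hilbert symbol and identify the only nontrivial class, for $k\equiv\pm 3\pmod 8$, with the fundamental unit $\varepsilon_2=1+\sqrt 2$ of $\mathbb Q(\sqrt 2)$. The differences are presentational rather than conceptual. For $\left(\frac{\zeta_{2^n},p}{\mathfrak p_{K_n}}\right)$ you use the tame formula $u^{(N\mathfrak p-1)/2}$ together with $v_2(p^{2^{n-3}}-1)=n$, while the paper iterates $N_{K_m/K_{m-1}}(\zeta_{2^m})=-\zeta_{2^{m-1}}$ down to $K_3$; for $\xi_{k,n}$ you compute $N_{K_n/K_3}(\xi_{k,n})=(-1)^{(n-3)(1-k)/2}\xi_{k,3}$ in one stroke and use the symmetries $\xi_{-k,3}=-\xi_{k,3}$, $\xi_{k+8,3}=-\xi_{k,3}$ to collapse the four residues mod $8$ into two, whereas the paper carries out the four cases separately. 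The one genuinely new ingredient in your write-up is the self-contained evaluation of $\left(\frac{\varepsilon_2,p}{\mathfrak p_{K_3}}\right)$: the identity $2(1+a)=a(2+a)$ in $\mathbb F_p$ together with the inertness of $\mathfrak p_{K_3^+}$ in $K_4^+=K_3^+(\sqrt{2+\sqrt 2})$ (forced by $p\equiv 9\not\equiv\pm 1\pmod{16}$) yields $-\left(\frac{2}{p}\right)_4$ directly, where the paper instead invokes the external reference \cite{chemsZkhnin1}.
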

 	\begin{proof}
 		For all $n\geq 3$, the  prime $p$ decomposes  into product  of four prime ideals of $K_n$ (see Proposition \ref{Number of primes dividing d}),
 		denote by $\mathfrak p_{K_{n}}$ one of them (such that $\mathfrak p_{K_{n-1}}\subset \mathfrak p_{K_{n}}$).  We have  $\zeta_{2^{n}}^2=\zeta_{2^{{n-1}}}$, so the minimal polynomial of $\zeta_{2^{n}}$ over $K_{{n-1}}$ is $X^2-\zeta_{n-1}$ and $N_{K_{n}/K_{n-1}}(\zeta_{2^{n}})=-\zeta_{2^{n-1}}$.
 		Thus
 		$$\begin{array}{ll}
 		\left( \frac{\zeta_{2^{n}},p}{\mathfrak p_{{K_{n}}}}\right)=\left( \frac{-\zeta_{ 2^{n-1}},p}{\mathfrak p_{{K_{n-1}}}}\right)=\left( \frac{\zeta_{2^{n-1}},p}{\mathfrak p_{{K_{n-1}}}}\right)=...=\left( \frac{\zeta_{8},p}{\mathfrak p_{{K_{3}}}}\right)=(-1)^{\frac{p-1}{8}} =-1(\text{see \cite{chemsZkhnin1}}).
 		\end{array}$$
 		$$\begin{array}{ll}
 		\left( \frac{\xi_{k,n},p}{\mathfrak p_{K_{n}}}\right)
 		&=\left( \frac{\zeta_{2^n}^{(1-k)/2},p}{\mathfrak p_{K_{n}}}\right)\left( \frac{\frac{1-\zeta_{2^n}^k}{1-\zeta_{2^n}},p}{\mathfrak p_{K_{n}}}\right)\\
 		&=(-1)^{(1-k)/2}\left( \frac{(1-\zeta_{2^n}^k)(1-\zeta_{2^n}),p}{\mathfrak p_{K_{n}}}\right)\\
 		&=(-1)^{(1-k)/2} \left( \frac{1-\zeta_{8}^k,p}{\mathfrak p_{{K_{3}}}}\right)\left( \frac{1-\zeta_{8},p}{\mathfrak p_{{K_{3}}}}\right)\\
 		&=(-1)^{(3-k)/2}\left( \frac{\zeta_{8}^{-1},p}{\mathfrak p_{{K_{3}}}}\right) \left( \frac{1-\zeta_{8}^k,p}{\mathfrak p_{{K_{3}}}}\right)\left( \frac{1-\zeta_{8},p}{\mathfrak p_{{K_{3}}}}\right)\nonumber\\
 		&\vspace{0.2cm}= \left\{  \begin{array}{cc}
 		\left( \frac{ \varepsilon_2,p}{\mathfrak p_{K_3}}\right),&\hspace{-1.55cm}\text{ if } k\equiv 3 \pmod 8 \\
 		\left( \frac{1+\zeta_{8},p}{\mathfrak p_{{K_{3}}}}\right)\left( \frac{1-\zeta_{8},p}{\mathfrak p_{{K_{3}}}}\right),&\text{ if } k\equiv 5 \pmod 8 \;\;\;\nonumber \\
 		-\left( \frac{1-\zeta_{8}^{-1},p}{\mathfrak p_{{K_{3}}}}\right)\left( \frac{1-\zeta_{8},p}{\mathfrak p_{{K_{3}}}}\right),&\text{ if } k\equiv 7 \pmod 8 \\
 		\left( \frac{1-\zeta_{8},p}{\mathfrak p_{{K_{3}}}}\right)\left( \frac{1-\zeta_{8},p}{\mathfrak p_{{K_{3}}}}\right),&\text{ if } k\equiv 1 \pmod 8 \\
 		\end{array} \right.\\
 		&\vspace{0.2cm}= \left\{  \begin{array}{cc}
 		-\left(\frac{2}{p}\right)_4,&\text{ if } k\equiv 3 \pmod 8 \;(\text{see \cite{chemsZkhnin1}})\vspace{0.2cm}\\
 		\left( \frac{1-i,p}{\mathfrak p_{{K_{3}}}}\right),&\text{ if } k\equiv 5 \pmod 8 \;\;\;\nonumber \vspace{0.2cm}\\
 		-\left( \frac{2-\sqrt{2},p}{\mathfrak p_{{K_{3}}}}\right),&\text{ if } k\equiv 7 \pmod 8 \vspace{0.2cm}\\
 		1,&\text{ if } k\equiv 1 \pmod 8. \\
 		\end{array} \right.\\
 		
 		\end{array}$$
 		
 		Using the proof of \cite[Lemma 3.4]{chemsZkhnin1}  we get \\
 		{\centering $\left( \frac{1-i,p}{\;\mathfrak p_{{K_{3}}}}\right)=\left( \frac{1-i}{\;\mathfrak p_{{K_{3}}}}\right)=\left( \frac{1+i}{\;\mathfrak p_{{K_{3}}}}\right)=\left( \frac{\zeta_8,p}{\;\mathfrak p_{{K_{3}}}}\right)\left( \frac{\sqrt{2},p}{\;\mathfrak p_{{K_{3}}}}\right)=\left( \frac{\zeta_8,p}{\;\mathfrak p_{{K_{3}}}}\right)(-1)^{\frac{p-1}{8}}\left( \frac{\varepsilon_2,p}{\;\mathfrak p_{{K_{3}}}}\right)=\left( \frac{\varepsilon_2,p}{\;\mathfrak p_{{K_{3}}}}\right)=-\left(\frac{2}{p}\right)_4.$ }\\
 		Similarly we have 
 		{\centering $ \left( \frac{2-\sqrt{2},p}{\mathfrak p_{{K_{3}}}}\right)= \left( \frac{2+\sqrt{2}}{\mathfrak p_{{K_{3}}}}\right)= \left( \frac{\sqrt{2},p}{\mathfrak p_{{K_{3}}}}\right)\left( \frac{\varepsilon_2,p}{\mathfrak p_{{K_{3}}}}\right)=(-1)^{\frac{p-1}{8}}\left( \frac{\varepsilon_2,p}{\;\mathfrak p_{{K_{3}}}}\right)\left(\frac{\varepsilon_2,p}{\mathfrak p_{{K_{3}}}}\right)=-1$ }. Which achieves the proof.
 	\end{proof}

 	\section{\textbf{The main results}}
 	The authors of \cite{chemsZkhnin2} computed  the rank of the $2$-class group of $L_{n,d}$, when the prime divisors of $d$ are congruent to $3$ or $5 \pmod 8$.
 	In this section we shall compute the rank of the $2$-class group of $L_{n,d}$, then the prime divisors of $d$ are congruent to $\pm 3\pmod 8$ or $9\pmod{16}$.

 	\begin{theorem}\label{thm main theorem}
 		Let $n\geq 3$ and $d>2$ be an odd composite   square-free integer of prime divisors congruent to $\pm 3\pmod 8$ or $9\pmod {16}$. Let $r$ denote the number of prime divisors of $d$ which are congruent to $3$ or $5\pmod 8$  and $q$ the number of those which are congruent to $9\pmod{16}$. Set  $t=4q+2r$. We have  
 		\begin{enumerate}[\rm1.]
 			\item   If there are two primes $p_1 $ and   $p_2$ dividing $d$ such that  $p_1\equiv -p_2\equiv 5\pmod 8$, then
 			$rank_2(Cl(L_{n,d}))=t-3.$
 			
 			\item If $d$  is divisible by a prime congruent to $3\pmod 8$ and none of the primes $p|d$ is congruent to $5\pmod 8$, then $rank_2(Cl(L_{n,d}))=t-2$ or $t-3$. More precisely,
 			$rank_2(Cl(L_{n,d}))= t-3$ if and only if there is a prime $p\equiv 1\pmod 8$ dividing $d$ such that  $  \left(\frac{2}{ p}\right)_4=-1$.
 			
 			\item If $d$  is divisible by a prime congruent to $5\pmod 8$ and none of the primes $p|d$ is congruent to $3\pmod 8$, then $rank_2(Cl(L_{n,d}))=t-2$ or $t-3$. More precisely,
 			$rank_2(Cl(L_{n,d}))= t-3$ if and only if there is a prime $p\equiv 1\pmod 8$ dividing $d$ such that  $ \left(\frac{2}{ p}\right)_4=1$.
 			\item If all the primes $p|d$ are congruent to $9\pmod  {16}$, then $rank_2(Cl(L_{n,d}))=4q-2$ or $4q-3$. More precisely, $ rank_2(Cl(L_{n,d}))= 4q-3$ if and only if there are two prime  divisors $p_1$ and $p_2$ of $d$ such that   $
 			\left(\frac{2}{ p_1}\right)_4=1$ and
 			$\left(\frac{2}{ p_2}\right)_4=-1$.
 		\end{enumerate}
 	\end{theorem}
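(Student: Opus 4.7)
My plan is to apply the ambiguous class number formula of Lemma~\ref{ambiguous class number formula} to the quadratic extension $L_{n,d}/K_n$. Since the class number of $K_n=\mathbb{Q}(\zeta_{2^n})$ is odd (a classical fact on $2$-power cyclotomic fields), the formula gives $\mathrm{rank}_2(Cl(L_{n,d})) = t-1-e$ with $t$ the number of primes of $K_n$ ramifying in $L_{n,d}$ and $2^e = [E_{K_n}:E_{K_n}\cap \mathcal N(L_{n,d}^*)]$. The count of $t$ is immediate: $K_n$ is totally imaginary for $n\ge 2$, so no archimedean place contributes, and by Lemma~\ref{lm ramified primes of L/K} the finite ramified primes are exactly the primes of $K_n$ above $d$. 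Combining Proposition~\ref{Number of primes dividing d}, Proposition~\ref{prop deco to 2}, and a small direct verification in $K_3$ and $K_4$, primes of $d$ congruent to $9\pmod{16}$ each contribute four primes of $K_n$ while primes congruent to $\pm 3\pmod 8$ each contribute two; hence $t = 4q+2r$ for every $n\ge 3$.

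For $e$ I use that the Hasse index $Q(K_n)=1$ (Lemma~\ref{lm cyclo units}), which yields $E_{K_n}=\langle\zeta_{2^n}\rangle E_{K_n^+}$, combined with Weber's theorem ($h^+(K_n)=1$ at small layers) to argue that cyclotomic units generate $E_{K_n}$ modulo squares there. To cover larger $n$ I will invoke Fukuda's Lemma~\ref{lm fukuda} applied to the cyclotomic $\mathbb{Z}_2$-tower $L_{3,d}\subset L_{4,d}\subset\cdots$, whose unique ramified prime, lying above $2$, is totally ramified from $L_{3,d}$ onward, to propagate stability of the $2$-rank from a low layer up to all $n\ge 3$. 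It then suffices to study the image of the cyclotomic units under the Hilbert symbol map $E_{K_n}\to\{\pm 1\}^{t}$ prescribed by Remark~\ref{Rmk 1}. By Lemmas~\ref{lm cyclo units}, \ref{lm1 computations} and \ref{lm compo equiv 9 mod 16}, this image is generated, modulo the trivial contribution of $-1=i^2$ and of the $\xi_{k,n}$ with $k\equiv 1,7\pmod 8$, by exactly two vectors: $v_\zeta$, the image of $\zeta_{2^n}$, equal to $-1$ at every prime $\mathfrak p\mid d$; and $v_\xi$, the common image of the $\xi_{k,n}$ with $k\equiv\pm 3\pmod 8$, whose coordinate at a prime above $p\mid d$ is $-1$ if $p\equiv 3\pmod 8$, $+1$ if $p\equiv 5\pmod 8$, and $-\bigl(\tfrac{2}{p}\bigr)_4$ if $p\equiv 9\pmod{16}$.

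Since $v_\zeta\ne 1$, we always have $e\ge 1$, and $e=2$ precisely when $v_\zeta,v_\xi$ are $\mathbb{F}_2$-linearly independent, i.e.\ when $v_\xi$ is neither trivial nor equal to $v_\zeta$. The four parts of the theorem then fall out by direct inspection. In case~(1) a prime $\equiv 5\pmod 8$ ensures $v_\xi\ne v_\zeta$ and a prime $\equiv 3\pmod 8$ ensures $v_\xi\ne 1$, forcing $e=2$ and rank $t-3$. In case~(2) the presence of a prime $\equiv 3\pmod 8$ gives $v_\xi\ne 1$, and $v_\xi=v_\zeta$ holds iff every prime $p\equiv 9\pmod{16}$ dividing $d$ satisfies $\bigl(\tfrac{2}{p}\bigr)_4=1$. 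Case~(3) is the mirror image: $v_\xi\ne v_\zeta$ is forced by a $\equiv 5\pmod 8$ prime, and $v_\xi=1$ iff every $p\equiv 9\pmod{16}$ dividing $d$ satisfies $\bigl(\tfrac{2}{p}\bigr)_4=-1$. Finally in case~(4), where $r=0$, the vectors $v_\zeta,v_\xi$ are independent exactly when $\bigl(\tfrac{2}{p}\bigr)_4$ takes both values $\pm 1$ on the primes dividing $d$. The main obstacle I expect is to track the Hilbert symbol values consistently across the Galois conjugates of a prime $\mathfrak p\mid p$ (so that the $\mathbb{F}_2$-dimension count is unambiguous), and to execute the Fukuda reduction cleanly so that the formula $t-1-e$ becomes unconditional at every $n\ge 3$, including the layers where the parity of $h^+(K_n)$ is not classically known.
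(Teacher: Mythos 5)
Your proposal is correct and follows essentially the same route as the paper: the ambiguous class number formula for $L_{n,d}/K_n$ with $t=4q+2r$, the explicit cyclotomic-unit generators of $E_{K_n}$ at the layers $n\in\{3,4,5\}$ where $h(K_n^+)=1$, the norm-residue computations of Lemmas \ref{lm1 computations} and \ref{lm compo equiv 9 mod 16} reducing everything to the two symbol vectors coming from $\zeta_{2^n}$ and $\xi_{k,n}$ with $k\equiv\pm3\pmod 8$, and finally Fukuda's Lemma \ref{lm fukuda} to propagate the rank to all $n\geq 3$. Your case-by-case determination of when $e=1$ or $e=2$ agrees with the paper's enumeration of $E_{K_n}/(E_{K_n}\cap\mathcal{N}(L_{n,d}))$.
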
	
 	\begin{proof}
 		We shall firstly  prove the items of  the above theorem assuming that $n\in \{3,4,5\}$. 	We have $h(K_n^+)=1$ (see \cite{masley}). So by \cite[Theorem 8.2]{washington1997introduction}, the unit group of $K_n^+$ is  generated by  $-1$ and the cyclotomic units  $\xi_{k,n}$, for odd integers $k$ such that  $1< k< 2^{n-1}$.
 		Thus, by Lemma \ref{lm cyclo units},  we have 
 		$$E_{K_n}=\langle \zeta_{ 2^{n}}, \xi_{k,n}, \text{ with  $k$ is an odd integer such that } 1< k< 2^{n-1}\rangle.$$
 		By   Lemma \ref{ambiguous class number formula}, Proposition \ref{Number of primes dividing d} and Lemma \ref{lm ramified primes of L/K}
 		we have  $rank_2(Cl(L_{n,d}))=t-1-e_{n,d}$, where $e_{n,d}$ is defined by $(E_{K_n}:E_{K_n}\cap  \mathcal{N}(L_{n,d}))=2^{ e_{n,d}}$.
 		We shall determine the classes representing $E_{K_n}/(E_{K_n}\cap  \mathcal{N}(L_{n,d}))$.
 		Let   $\alpha\in E_{K_n} $.    $\overline{\alpha}$ denotes the class of $\alpha$  in  $E_{K_n}/(E_{K_n}\cap  \mathcal{N}(L_{n,d}))$. Let $p$ be a prime dividing $d$ and  $\mathfrak p_{K_{n}}$ be a prime ideal of $K_n$     lying over $p$.
 		We have
 		\begin{eqnarray*}
 			\left( \frac{\alpha,d}{\mathfrak p_{K_{n}}}\right)=\prod_{q|d} \left( \frac{\alpha,d}{\mathfrak p_{K_{n}}}\right)&=&\left(\frac{\alpha,p}{\mathfrak p_{K_{n}}}\right)\prod_{q|d\; \text{ and } \;q\not= p} \left( \frac{\alpha,q}{\mathfrak p_{K_{n}}}\right)\\
 			&=&\left(\frac{\alpha,p}{\mathfrak p_{K_{n}}}\right)\prod_{q|d\;  \text{ and } \;q\not= p}  \left( \frac{q}{ \;\mathfrak p_{K_{n}}}\right)^{0}
 			=\left(\frac{\alpha,p}{\mathfrak p_{K_{n}}}\right).
 		\end{eqnarray*}

 		Note that the units $\xi_{k,n}$ for $k\equiv \pm 1\pmod 8$ are  norms in $L_{n,d}/K_n$ so we will disregard them.   Let $k$ and $k'$ denote any two positive integers such that $k, k'\equiv \pm 3\pmod 8.$
 		\begin{enumerate}[\rm1.]
 			
 			\item  
 			By Lemmas \ref{lm1 computations} and \ref{lm compo equiv 9 mod 16}, we have $\zeta_{ 2^{n}}$ and $\xi_{k,n}$  are not norms in $L_{n,d}/K_n$. Furthermore 
 			$$   \left\{\begin{array}{cl}
 			\left( \frac{\zeta_{ 2^{n}}\xi_{k,n},p}{\mathfrak p_{K_{n}}}\right)=-1, & \text{ if }    p\equiv 5\pmod 8,\vspace{0.3cm}\\
 			\left( \frac{ \xi_{k,n}\xi_{k',n},p}{\mathfrak p_{K_{n}}}\right)=1, &  \text{ for all prime } p \text{ dividing } d.\\
 			
 			\end{array}\right.$$

 			Thus $\overline{\zeta_{2^n}}\not=  \overline{\xi_{k,n}}$ and  $\overline{\xi_{k,n}}=\overline{\xi_{k',n}}$. Hence	$E_{K_n}/(E_{K_n}\cap  \mathcal{N}(L_{n,d}))=\{\overline{1},\overline{\zeta_{2^n}},\overline{\xi_{k,n}}, \overline{\zeta_{2^n}\xi_{k,n}}\}$. It follows that $rank_2(Cl(L_{n,d}))=t-3$.
 			
 			\item 
 			By Lemmas \ref{lm1 computations} and \ref{lm compo equiv 9 mod 16}, we have $\zeta_{ 2^{n}}$ and $\xi_{k,n}$  are not norms in $L_{n,d}/K_n$. Furthermore 
 			$$   \left\{\begin{array}{cl}
 			\left( \frac{\zeta_{ 2^{n}}\xi_{k,n},p}{\mathfrak p_{K_{n}}}\right)=\left(\frac{2}{p}\right)_4, & \text{ if }    p\equiv 3\pmod 8,\vspace{0.3cm}\\
 			\left( \frac{ \xi_{k,n}\xi_{k',n},p}{\mathfrak p_{K_{n}}}\right)=1, &  \text{ for all prime } p \text{ dividing } d.\\
 			\end{array}\right.$$
 			Then $\overline{\xi_{k,n}}=\overline{\xi_{k',n}}$ and 	  $\overline{\zeta_{2^n}}\not=  \overline{\xi_{k,n}}$ if and only if $\left(\frac{2}{p}\right)_4=-1$. Thus,   	$E_{K_n}/(E_{K_n}\cap \mathcal{N}(L_{n,d}))=\{\overline{1},\overline{\zeta_{2^n}}\}$, if  $\left(\frac{2}{p}\right)_4=1$ and 	$E_{K_n}/(E_{K_n}\cap \mathcal{N}(L_{n,d}))=\{\overline{1},\overline{\zeta_{2^n}},\overline{\xi_{k,n}}, \overline{\zeta_{2^n}\xi_{k,n}}\}$ if not. So the second item.
 			
 			\item By Lemmas \ref{lm1 computations} and \ref{lm compo equiv 9 mod 16}, we have $\zeta_{ 2^{n}}$  are not norm  in $L_{n,d}/K_n$  and $\xi_{k,n}$ is not norm in
 			$L_{n,d}/K_n$ is and only if $\left(\frac{2}{p}\right)_4=1$. So with similar discussion as above, one shows that 
 			$E_{K_n}/(E_{K_n}\cap \mathcal{N}(L_{n,d}))=\{\overline{1},\overline{\zeta_{2^n}}  \}$ if $\left(\frac{2}{p}\right)_4=-1$ and  $E_{K_n}/(E_{K_n}\cap \mathcal{N}(L_{n,d}))=\{\overline{1},\overline{\zeta_{2^n}},\overline{\xi_{k,n}}, \overline{\zeta_{2^n}\xi_{k,n}}\}$ if not. So the results.
 			
 			\item \begin{enumerate}[\rm 	$\bullet$]
 				\item 	Assume that  for all prime $p$ dividing $d$ we have   $\left(\frac{2}{ p}\right)_4=1$, then 	$\left( \frac{\zeta_{ 2^{n}}\xi_{k,n},p}{\mathfrak p_{K_{n}}}\right)=\left( \frac{\zeta_{ 2^{n}} ,p}{\mathfrak p_{K_{n}}}\right)\left( \frac{ \xi_{k,n},p}{\mathfrak p_{K_{n}}}\right)=1$.  It follows that $\overline{\zeta_{2^n}}=\overline{\xi_{k,n}}$. So
 				$E_{K_n}/(E_{K_n}\cap \mathcal{N}(L_{n,d}))=\{\overline{1},\overline{\zeta_{2^n}}\}$ and $e_{n,d}=1$. 
 				\item  If  for all the primes $p$ dividing $d$ we have $\left(\frac{2}{ p}\right)_4=-1$, then $\xi_{k,n}$ is norm in $L_{n,d}/K_n$.	So $E_{K_n}/(E_{K_n}\cap \mathcal{N}(L_{n,d}))=\{\overline{1},\overline{\zeta_{2^n}}\}$ and $e_{n,d}=1$. 
 				\item Suppose now there are two primes $p_1$ and $p_2$ dividing  $d$  such that $
 				\left(\frac{2}{ p_1}\right)_4=1$ and
 				$\left(\frac{2}{ p_2}\right)_4=-1$.
 				We have 	$\left( \frac{\xi_{k,n},p_1}{\mathfrak p_{K_{n}}}\right)=\left( \frac{\zeta_{ 2^{n}}\xi_{k,n},p_2}{\mathfrak p_{K_{n}}}\right)=-1$. Thus $\overline{\zeta_{2^n}}\not=\overline{\xi_{k,n}}$ and $\overline{\xi_{k,n}}\not=\overline{1}$. We infer that
 				$E_{K_n}/(E_{K_n}\cap \mathcal{N}(L_{n,d}))=\{\overline{1},\overline{\zeta_{2^n}},\overline{\xi_{k,n}}, \overline{\zeta_{2^n}\xi_{k,n}}\}$ and $e_{n,d}=2$. So the third item.
 			\end{enumerate}
 		\end{enumerate}	
 		Thus we proved the theorem for $n\in \{3,4,5\}$.\\
 		Since  $rank_2(Cl(L_{3,d}))=rank_2(Cl(L_{4,d}))$, then Lemma \ref{lm fukuda} achieves the proof.

 	\end{proof}	
 	
 	We similarly get the following result:
 	\begin{theorem}\label{thm 3}
 		Let $n\geq 3$ be a positive integer and let $p$ denote a prime such that   $p\equiv 9 \pmod{16}$. Then 
 		$$ rank_2(Cl(L_{n,p}))= 2.$$	
 	\end{theorem}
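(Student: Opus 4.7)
The plan is to follow closely the method of Theorem \ref{thm main theorem}, specialized to $d=p$ a single prime with $p\equiv 9\pmod{16}$. First I check that $p$ splits into exactly four prime ideals of $K_n$ for every $n\geq 3$: this is Remark \ref{Rmq decoposition in K4} for $n\in\{3,4\}$ and Proposition \ref{Number of primes dividing d} for $n\geq 5$. By Lemma \ref{lm ramified primes of L/K}, these four primes are precisely the ones that ramify in $L_{n,p}/K_n$, so $t=4$. For $n\in\{3,4,5\}$ one has $h(K_n)=1$, so Lemma \ref{ambiguous class number formula} applies and gives $rank_2(Cl(L_{n,p}))=3-e_{n,p}$, reducing the problem to proving $e_{n,p}=1$ for these small $n$.

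Next I compute $e_{n,p}$ by evaluating Hilbert symbols at $\mathfrak p_{K_n}$. Since $d=p$ is a single prime, the product over prime divisors of $d$ that appears in the proof of Theorem \ref{thm main theorem} collapses to a single factor, so by Remark \ref{Rmk 1} a unit $u\in E_{K_n}$ is a norm in $L_{n,p}/K_n$ if and only if $\left(\frac{u,p}{\mathfrak p_{K_n}}\right)=1$. By Lemma \ref{lm cyclo units} it suffices to test this on the generators $\zeta_{2^n}$ and $\xi_{k,n}$; and by Lemma \ref{lm compo equiv 9 mod 16} the units $\xi_{k,n}$ with $k\equiv\pm 1\pmod 8$ are automatically norms, so only the classes of $\zeta_{2^n}$ and of a representative $\xi_{k,n}$ with $k\equiv\pm 3\pmod 8$ remain to be resolved.

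The case split on $\left(\frac{2}{p}\right)_4$ then closes the computation. If $\left(\frac{2}{p}\right)_4=-1$, Lemma \ref{lm compo equiv 9 mod 16} yields $\left(\frac{\xi_{k,n},p}{\mathfrak p_{K_n}}\right)=1$ for every admissible $k$, so every $\xi_{k,n}$ is a norm and $E_{K_n}/(E_{K_n}\cap\mathcal N(L_{n,p}))=\{\overline 1,\overline{\zeta_{2^n}}\}$. If $\left(\frac{2}{p}\right)_4=+1$, then for $k\equiv\pm 3\pmod 8$ both $\xi_{k,n}$ and $\zeta_{2^n}$ have Hilbert symbol $-1$, whereas $\zeta_{2^n}\xi_{k,n}$ has symbol $+1$, so $\overline{\xi_{k,n}}=\overline{\zeta_{2^n}}$ and the quotient again has order $2$. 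Either way $e_{n,p}=1$, and hence $rank_2(Cl(L_{n,p}))=2$ for $n\in\{3,4,5\}$.

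To conclude I apply Fukuda's Lemma \ref{lm fukuda} to the cyclotomic $\mathbb{Z}_2$-extension $(L_{n,p})_{n\geq 3}$: the ramified primes sit above $2$ and are totally ramified from the first layer onward, inherited from the total ramification of $2$ in the cyclotomic $\mathbb{Z}_2$-extension of $\mathbb Q$. The stability $rank_2(Cl(L_{3,p}))=rank_2(Cl(L_{4,p}))=2$ therefore forces $rank_2(Cl(L_{n,p}))=2$ for all $n\geq 3$. I do not anticipate a real obstacle; the only care required is in the Hilbert-symbol bookkeeping, where both signs of $\left(\frac{2}{p}\right)_4$ must be verified to collapse to the same index $e_{n,p}=1$.
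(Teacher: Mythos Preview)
Your proposal is correct and is precisely the argument the paper intends by ``We similarly get the following result'': specialize the machinery of Theorem~\ref{thm main theorem} to the single prime $d=p\equiv 9\pmod{16}$, observe $t=4$, use Lemmas~\ref{lm cyclo units} and~\ref{lm compo equiv 9 mod 16} to show $e_{n,p}=1$ for small $n$ (in both cases of $\left(\frac{2}{p}\right)_4$), and conclude via Lemma~\ref{lm fukuda}. The only cosmetic point is that Remark~\ref{Rmk 1} requires the Hilbert symbol to be trivial at \emph{all four} primes $\mathfrak p_{K_n}\mid p$, not just one; but since Lemma~\ref{lm compo equiv 9 mod 16} gives the same value at every such prime, your shortcut is justified.
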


 	\section{\textbf{Appendix}}
 	In this appendix,    we give the rank of the $2$-class group of  $ L_{n,d}$ according to  that of
 	$ L_{4,d}$, when the prime divisors of $d$ are congruent to $\pm 3\pmod 8$ or $\pm 7\pmod{16}$.
 	\begin{lm}\label{lm computations 7 pmod}
 		Let $n\geq 4$ be an integer and $p$  a   prime integer  congruent to $7\pmod{16}$. Then for all    prime ideal $\mathfrak p_{K_{n}}$ of $K_n$ dividing $p$, we have
 		$$\left( \frac{\zeta_{2^n},p}{\mathfrak p_{K_{n}}}\right)=\left( \frac{\zeta_{16},p}{\mathfrak p_{K_{4}}}\right) \text{ and }\left( \frac{\xi_{k,n},p}{\mathfrak p_{K_{n}}}\right)= \left\{\begin{array}{cc}
 		\left( \frac{\xi_{3,4},p}{\mathfrak p_{K_{4}}}\right), & \text{ if }     k\equiv 3 \pmod{16}\\
 		\left( \frac{\xi_{5,4},p}{\mathfrak p_{K_{4}}}\right), & \text{ if }     k\equiv 5  \pmod {16}\\
 		\left( \frac{\xi_{7,4},p}{\mathfrak p_{K_{4}}}\right), & \text{ if }     k\equiv 7 \pmod{16}\\
 		1, & \text{ if }     k\equiv 1  \pmod{16}.\\
 		\end{array}\right.$$	
 		
 		And there   are $\varepsilon_1$, $\varepsilon_2$, $\varepsilon_3\in \{-1,1\}$ such that for all    prime ideal $\mathfrak p_{K_{n}}$ of $K_n$ dividing $p$ we have

 		$$ \left( \frac{\xi_{k,n},p}{\mathfrak p_{K_{n}}}\right)= \left\{\begin{array}{cl}
 		\varepsilon_1\left( \frac{\xi_{3,4},p}{\mathfrak p_{K_{4}}}\right), & \text{ if }     k\equiv  11 \pmod{16}\\
 		\varepsilon_2\left( \frac{\xi_{5,4},p}{\mathfrak p_{K_{4}}}\right), & \text{ if }     k\equiv  13 \pmod {16}\\
 		\varepsilon_3\left( \frac{\xi_{7,4},p}{\mathfrak p_{K_{4}}}\right), & \text{ if }     k\equiv  15 \pmod{16}\\
 		1, & \text{ if }     k\equiv 9 \pmod{16}.\\
 		\end{array}\right.$$

 		%
 	\end{lm}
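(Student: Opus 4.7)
The plan is to parallel the proof of Lemma \ref{lm compo equiv 9 mod 16}. By Proposition \ref{Number of primes dividing d} together with Remark \ref{Rmq decoposition in K4}, a rational prime $p\equiv 7\pmod{16}$ splits into four prime ideals of $K_m$ for every $m\geq 4$, hence any prime $\mathfrak p_{K_n}$ of $K_n$ above $p$ is inert in $K_n/K_4$. By norm compatibility of the local Hilbert symbol this yields
\[
\left(\frac{\alpha,p}{\mathfrak p_{K_n}}\right)=\left(\frac{N_{K_n/K_{n-1}}(\alpha),p}{\mathfrak p_{K_{n-1}}}\right)\qquad(n\geq 5),
\]
and the idea is to iterate this relation all the way down to $K_4$.

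For the first identity, the computation $N_{K_n/K_{n-1}}(\zeta_{2^n})=-\zeta_{2^{n-1}}$ together with $\left(\frac{-1,p}{\mathfrak p_{K_{n-1}}}\right)=1$ (since $-1$ is already a square in $K_{n-1}$) directly produces $\left(\frac{\zeta_{2^n},p}{\mathfrak p_{K_n}}\right)=\left(\frac{\zeta_{16},p}{\mathfrak p_{K_4}}\right)$ by induction, exactly as in the $p\equiv 9\pmod{16}$ case.

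For the cyclotomic units $\xi_{k,n}$, the conjugates of $\zeta_{2^n}$ over $K_{n-1}$ being $\pm\zeta_{2^n}$ give $N_{K_n/K_{n-1}}(1-\zeta_{2^n}^k)=1-\zeta_{2^{n-1}}^k$ and $N_{K_n/K_{n-1}}(\zeta_{2^n}^{(1-k)/2})=(-\zeta_{2^{n-1}})^{(1-k)/2}$, which combine into
\[
N_{K_n/K_{n-1}}(\xi_{k,n})=(-1)^{(1-k)/2}\,\zeta_{2^{n-1}}^{(1-k)/2}\cdot\frac{1-\zeta_{2^{n-1}}^k}{1-\zeta_{2^{n-1}}}.
\]
In the range $1<k<2^{n-2}$ this is simply $(-1)^{(1-k)/2}\xi_{k,n-1}$, whose Hilbert symbol coincides with $\left(\frac{\xi_{k,n-1},p}{\mathfrak p_{K_{n-1}}}\right)$. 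When $k\geq 2^{n-2}$, I would use the identities $\zeta_{2^{n-1}}^{2^{n-2}}=-1$ (to substitute $\zeta_{2^{n-1}}^k=-\zeta_{2^{n-1}}^{k-2^{n-2}}$) and $1-\zeta^{-j}=-\zeta^{-j}(1-\zeta^j)$ to fold $k$ back into a smaller residue, at the price of extra factors of the form $\pm\zeta_{2^{n-1}}^{\bullet}$ whose Hilbert symbols, by the first identity applied at the intermediate level, evaluate to $p$-dependent signs. Iterating the descent to $K_4$ and splitting according to the residue $k\bmod 16$ then yields the two claimed formulas: for residues in $\{1,3,5,7\}$ the accumulated signs cancel and one obtains equality on the nose (with $\xi_{1,4}=1$ in the $k\equiv 1$ case), whereas for residues in $\{9,11,13,15\}$ a single uncancelled accumulated sign $\varepsilon_i\in\{\pm 1\}$ survives (with $\xi_{1,4}=1$ again producing the constant $1$ in the $k\equiv 9$ case).

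The main obstacle will be the combinatorial bookkeeping of the signs and roots of unity that arise whenever a descent step requires an ``upper-half fold'', together with the verification that the total accumulated factor depends only on the residue $k\bmod 16$ and on $p$, not on $k$ itself or on $n$. Organizing this bookkeeping so that exactly one uncancelled $\pm 1$ survives in the second block of residues, while all signs cancel in the first block, is the heart of the argument and is precisely what forces the statement for the second block to be only up to the fixed signs $\varepsilon_1,\varepsilon_2,\varepsilon_3$.
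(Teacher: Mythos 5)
Your setup matches the paper's: the norm-compatibility descent from $K_n$ to $K_4$ (justified by the fact that the four primes of $K_4$ above $p\equiv 7\pmod{16}$ are inert in $K_n/K_4$, via Proposition \ref{Number of primes dividing d} and Remark \ref{Rmq decoposition in K4}), the identity $N_{K_n/K_{n-1}}(\zeta_{2^n})=-\zeta_{2^{n-1}}$, and $N_{K_n/K_{n-1}}(1-\zeta_{2^n}^k)=1-\zeta_{2^{n-1}}^k$; this correctly disposes of the first identity and of the residues $k\equiv 1,3,5,7\pmod{16}$, where the descent lands directly on $\left(\frac{\zeta_{16},p}{\mathfrak p_{K_4}}\right)^{(1-k)/2}\left(\frac{1-\zeta_{16}^{k\bmod 16},p}{\mathfrak p_{K_4}}\right)\left(\frac{1-\zeta_{16},p}{\mathfrak p_{K_4}}\right)=\left(\frac{\xi_{k\bmod 16,4},p}{\mathfrak p_{K_4}}\right)$ with no folding needed. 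But for the second block $k\equiv 9,11,13,15\pmod{16}$ your argument has a genuine gap, in two respects. First, the leftover factor produced by the ``fold'' is not of the form $\pm\zeta_{2^{n-1}}^{\bullet}$: for instance for $k\equiv 11\pmod{16}$ one has $1-\zeta_{16}^{k}=1+\zeta_{16}^{3}$, and comparing with $\xi_{3,4}$ forces the extra factor $\left(\frac{(1+\zeta_{16}^{3})(1-\zeta_{16}^{3}),p}{\mathfrak p_{K_4}}\right)=\left(\frac{1-\zeta_{8}^{3},p}{\mathfrak p_{K_4}}\right)$, a symbol of an element of $K_3$ that is \emph{not} a root-of-unity symbol and is not computable from the first identity. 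Indeed, if your description were correct the $\varepsilon_i$ would come out explicitly as powers of $\left(\frac{\zeta_{16},p}{\mathfrak p_{K_4}}\right)$, which is a different (and stronger) statement than the lemma makes; the indeterminate signs $\varepsilon_i$ exist precisely because this factor is of a different nature.

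Second, and more importantly, the actual content of the second half of the lemma is that this leftover factor is \emph{the same for every prime ideal above $p$} (so that a single $\varepsilon_i$ works ``for all $\mathfrak p_{K_n}$''), and your proposal never addresses this; you defer it to ``combinatorial bookkeeping'', which is exactly the step that needs a proof. The paper settles it as follows: since $1-\zeta_{8}^{3}\in K_3$ and the prime $\mathfrak p_{K_3}$ below $\mathfrak p_{K_4}$ splits in $K_4$, the symbol descends to $\left(\frac{1-\zeta_{8}^{3},p}{\mathfrak p_{K_3}}\right)$; then the product formula over the primes of $K_3$, together with the triviality of the symbol at the dyadic prime (Remark \ref{Rmk 1} and Lemma \ref{lm ramified primes of L/K}), shows that the symbols at the two primes of $K_3$ above $p$ coincide, which defines $\varepsilon_1$ independently of the chosen prime (and of $n$ and of $k$ within its residue class); the cases $k\equiv 13,15$ are analogous, and $k\equiv 9$ gives the constant value $1$. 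Without this product-formula (or an equivalent) argument, your proof establishes the formulas only prime-by-prime, with signs that could a priori vary with $\mathfrak p_{K_n}$, which is insufficient for the way the lemma is used later.
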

 	\begin{proof}By Remark \ref{Rmq decoposition in K4} and Proposition \ref{Number of primes dividing d}, there are four prime ideals of $K_4$ lying over $p$, and these primes are inert in $K_n$ for all $n\geq 5$.
 		Since the minimal polynomial of $\zeta_{2^{n}}$ over $K_{{n-1}}$ is $X^2-\zeta_{n-1}$, then $N_{K_{n}/K_{n-1}}(\zeta_{2^{n}})=-\zeta_{2^{n-1}}$. We have \\
 		$$\left( \frac{\zeta_{2^n},p}{\mathfrak p_{K_{n}}}\right)=\left( \frac{\zeta_{2^n},p}{\mathfrak p_{K_{n}}}\right)=\left( \frac{\zeta_{{2^{n-1}}},p}{\mathfrak p_{{K_{n-1}}}}\right)=...=\left( \frac{\zeta_{16},p}{\mathfrak p_{{K_{4}}}}\right),$$
 		
 		and	
 		$$\left( \frac{1-\zeta_{{2^n}}^k,p}{\mathfrak p_{K_{n}}}\right)=\left( \frac{N_{K_n/K_{n-1}}(1-\zeta_{2^n}^k),p}{\mathfrak p_{{K_{n-1}}}}\right)=...=\left( \frac{1-\zeta_{16}^k,p}{\mathfrak p_{{K_{4}}}}\right).$$

 		\begin{eqnarray}
 		\left( \frac{\xi_{k,n},p}{\mathfrak p_{K_{n}}}\right)\nonumber
 		&=&\left( \frac{\zeta_{2^n}^{(1-k)/2},p}{\mathfrak p_{{K_{n}}}}\right)\left( \frac{\frac{1-\zeta_{2^n}^k}{1-\zeta_{2^n}},p}{\mathfrak p_{{K_{n}}}}\right)\nonumber\\
 		&=&\left( \frac{\zeta_{2^n},p}{\mathfrak p_{{K_{n}}}}\right)^{(1-k)/2}\left( \frac{(1-\zeta_{2^n}^k)(1-\zeta_{2^n}),p}{\mathfrak p_{{K_{n}}}}\right)\nonumber\\
 		&=&\left( \frac{\zeta_{2^n},p}{\mathfrak p_{{K_{n}}}}\right)^{(1-k)/2}\left( \frac{1-\zeta_{2^n}^k,p}{\mathfrak p_{{K_{n}}}}\right)\left( \frac{1-\zeta_{2^n},p}{\mathfrak p_{{K_{n}}}}\right)\nonumber\\
 		&=&\left( \frac{\zeta_{16},p}{\mathfrak p_{{K_{4}}}}\right)^{(1-k)/2} \left( \frac{1-\zeta_{16}^k,p}{\mathfrak p_{{K_{4}}}}\right)\left( \frac{1-\zeta_{16},p}{\mathfrak p_{{K_{4}}}}\right).\nonumber\end{eqnarray}
 		We consider the following cases
 		\begin{enumerate}[\rm $\bullet$]
 			\item If $k\equiv1,3,5\text{ or }7\pmod{16}$, then we have directly the result.
 			\item If $k\equiv 11\pmod{16}$, then $\zeta_{16}^k=e^{\frac{11\pi}{8}i}=-e^{\frac{3\pi}{8}i}=-\zeta_{16}^3$. Thus	
 			\begin{eqnarray*}
 				\left( \frac{\xi_{k,n},p}{\mathfrak p_{K_{n}}}\right)&=&	\left( \frac{\zeta_{16},p}{\mathfrak p_{{K_{n}}}}\right)^{(1-k)/2} \left( \frac{1+\zeta_{16}^3,p}{\mathfrak p_{{K_{4}}}}\right)\left( \frac{1-\zeta_{16},p}{\mathfrak p_{{K_{4}}}}\right)\\
 				&=&\left( \frac{\zeta_{16},p}{\mathfrak p_{{K_{n}}}}\right)^{(1-k)/2} \left( \frac{1+\zeta_{16}^3,p}{\mathfrak p_{{K_{4}}}}\right)\left( \frac{1-\zeta_{16},p}{\mathfrak p_{{K_{4}}}}\right)\left( \frac{1-\zeta_{16}^3,p}{\mathfrak p_{{K_{4}}}}\right)\left( \frac{1-\zeta_{16}^3,p}{\mathfrak p_{{K_{4}}}}\right)\\
 				&=&\left( \frac{(1+\zeta_{16}^3)(1-\zeta_{16}^3),p}{\mathfrak p_{{K_{4}}}}\right)\left( \frac{\xi_{3,4},p}{\mathfrak p_{K_{4}}}\right)=\left( \frac{1-\zeta_{8}^3,p}{\mathfrak p_{{K_{4}}}}\right)\left( \frac{\xi_{3,4},p}{\mathfrak p_{K_{4}}}\right).
 			\end{eqnarray*}
 			Since $\mathfrak p_{K_{3}}$  is totally decomposed in $K_4$, then  $\left( \frac{\xi_{k,n},p}{\mathfrak p_{K_{n}}}\right)=\left( \frac{1-\zeta_{8}^3,p}{\mathfrak p_{{K_{3}}}}\right)\left( \frac{\xi_{3,4},p}{\mathfrak p_{K_{4}}}\right)$.
 			We have $p$ decomposes into product  of two prime ideals, $\mathfrak p_{K_{3}}$ and $\mathfrak p_{K_{3}}'$ of $K_3$, then by the product formula, we have :
 			
 			$$\left( \frac{1-\zeta_{8}^3,p}{\mathfrak p_{{K_{3}}}}\right)\left( \frac{1-\zeta_{8}^3,p}{\mathfrak p_{{K_{3}}}'}\right) \left( \frac{1-\zeta_{8}^3,p}{\mathfrak q_{{K_{3}}}}\right)=1,$$
 		where  $\mathfrak q_{{K_{3}}}$ is the prime ideal of $K_3$ above $2$. By Remark \ref{Rmk 1} and 
 		Lemma \ref{lm ramified primes of L/K}
 		 we easily deduce that  $\left( \frac{1-\zeta_{8}^3,p}{\mathfrak q_{{K_{3}}}}\right)=1$. Thus	$\left( \frac{1-\zeta_{8}^3,p}{\mathfrak p_{{K_{3}}}}\right)=\left( \frac{1-\zeta_{8}^3,p}{\mathfrak p_{{K_{3}}}'}\right)=\varepsilon_1$
 			and
 			$	\left( \frac{\xi_{k,n},p}{\mathfrak p_{K_{n}}}\right)=\varepsilon_1\left( \frac{\xi_{3,4},p}{\mathfrak p_{K_{4}}}\right).$
 			\item Similarly, we show the relationships between the other norm residue   symbols.	\end{enumerate}	\end{proof}

 	\begin{theorem}
 		Let  $d$ be a square-free integer such that the prime    divisors of $d$ are congruent to $\pm 3\pmod 8$ or $\pm 7\pmod{16}$. Then, for all positive integer  $n\geq 4$
 		, we have
 		$$rank_2(Cl(L_{n,d}))=rank_2(Cl(L_{4,d})).$$	
 	\end{theorem}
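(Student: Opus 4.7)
The plan is to reduce, as in the proof of Theorem \ref{thm main theorem}, to the single equality $rank_2(Cl(L_{4,d})) = rank_2(Cl(L_{5,d}))$, and then apply Fukuda's Lemma \ref{lm fukuda} to the cyclotomic $\mathbb{Z}_2$-extension $L_{\infty,d}/L_{4,d}$. Its ramified primes lie exclusively above $2$, and since $2$ is totally ramified in every $K_n/K_4$ while remaining unramified in $L_{4,d}/K_4$ (Lemma \ref{lm ramified primes of L/K}), the unique prime of $L_{4,d}$ above $2$ is totally ramified throughout the tower, so Fukuda's hypothesis is satisfied with $n_0 = 4$.

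For $n \in \{4,5\}$ the class number $h(K_n^+)=1$, so by Lemma \ref{lm cyclo units} we have $E_{K_n} = \langle \zeta_{2^n}, \xi_{k,n} : k \text{ odd},\ 1 < k < 2^{n-1} \rangle$. Combining Lemmas \ref{ambiguous class number formula} and \ref{lm ramified primes of L/K} with Propositions \ref{prop deco to 2} and \ref{Number of primes dividing d} yields
$$rank_2(Cl(L_{n,d})) = t - 1 - e_{n,d},$$
where $t = 2r + 4q$ (with $r$ counting the prime divisors of $d$ congruent to $3,5 \pmod 8$ and $q$ those congruent to $\pm 7 \pmod{16}$) is constant for $n \geq 4$ by stability of the decomposition type, and $2^{e_{n,d}} = [E_{K_n} : E_{K_n} \cap \mathcal{N}(L_{n,d})]$. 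The problem reduces to proving $e_{4,d} = e_{5,d}$.

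By Remark \ref{Rmk 1}, $e_{n,d}$ is the $\mathbb{F}_2$-dimension of the image $V_n$ of $E_{K_n}$ under the vector of norm-residue symbols indexed by the primes of $K_n$ dividing $d$. Identifying the primes over $d$ in $K_5$ with those in $K_4$ via restriction (each such $\mathfrak{p}_{K_5}$ is inert over $K_4$), Lemma \ref{lm1 computations}, Lemma \ref{lm compo equiv 9 mod 16} and the first part of Lemma \ref{lm computations 7 pmod} show that the symbols of $\zeta_{32}, \xi_{3,5}, \xi_{5,5}, \xi_{7,5}$ match those of $\zeta_{16}, \xi_{3,4}, \xi_{5,4}, \xi_{7,4}$ respectively. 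In particular $V_5 \supseteq V_4$, giving $rank_2(Cl(L_{5,d})) \leq rank_2(Cl(L_{4,d}))$. The reverse inequality follows from the total ramification established above: the norm $Cl_2(L_{5,d}) \twoheadrightarrow Cl_2(L_{4,d})$ is surjective (the standard consequence of class field theory for totally ramified cyclic $2$-extensions), whence the induced map $Cl_2(L_{5,d})/2 \twoheadrightarrow Cl_2(L_{4,d})/2$ of $\mathbb{F}_2$-vector spaces is also surjective, forcing $rank_2(Cl(L_{5,d})) \geq rank_2(Cl(L_{4,d)})$. Combining the two inequalities yields the desired equality, and Lemma \ref{lm fukuda} then propagates it to every $n \geq 4$.

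The main technical delicacy is the status of the additional generators $\xi_{11,5}, \xi_{13,5}, \xi_{15,5}$ appearing at level $5$, whose symbols differ from those of $\xi_{3,4}, \xi_{5,4}, \xi_{7,4}$ by the signs $\varepsilon_1, \varepsilon_2, \varepsilon_3 \in \{\pm 1\}$ at the primes $p \equiv 7 \pmod{16}$ (the second part of Lemma \ref{lm computations 7 pmod}). A direct verification that these signs do not enlarge $V_5$ beyond $V_4$ would require pinning down each $\varepsilon_i(p)$ via the product formula at level $3$; the two-sided inequality strategy above bypasses this difficulty by forcing $V_5 = V_4$ on dimension grounds alone.
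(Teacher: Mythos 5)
Your proof is correct, and its key step differs from the paper's in a useful way. The paper's own proof simply asserts, from Lemmas \ref{lm1 computations}, \ref{lm compo equiv 9 mod 16} and \ref{lm computations 7 pmod}, that $e_{5,d}=e_{4,d}$, notes that the number $t$ of ramified primes is unchanged by Propositions \ref{Number of primes dividing d} and \ref{prop deco to 2}, and concludes with Lemmas \ref{ambiguous class number formula} and \ref{lm fukuda}; taken literally, this requires knowing that the extra level-$5$ generators $\xi_{9,5},\xi_{11,5},\xi_{13,5},\xi_{15,5}$, whose symbols at primes $p\equiv 7\pmod{16}$ involve the undetermined signs $\varepsilon_1,\varepsilon_2,\varepsilon_3$ of Lemma \ref{lm computations 7 pmod}, do not enlarge the group of symbol vectors, a point the paper leaves implicit. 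You instead extract from the same lemmas only the containment $V_5\supseteq V_4$ (hence $e_{5,d}\geq e_{4,d}$ and $rank_2(Cl(L_{5,d}))\leq rank_2(Cl(L_{4,d}))$) and obtain the reverse inequality from surjectivity of the norm $Cl_2(L_{5,d})\to Cl_2(L_{4,d})$, valid because $L_{5,d}/L_{4,d}$ is ramified above $2$ and hence not contained in the Hilbert class field; this two-sided argument both proves the theorem and, as a byproduct, establishes the equality $e_{5,d}=e_{4,d}$ (so $V_5=V_4$) that the paper asserts. The cost is an extra appeal to class field theory, which is standard and sits well with the total-ramification hypothesis already needed for Fukuda's lemma. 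One small inaccuracy: the prime of $L_{4,d}$ above $2$ need not be unique, since the prime of $K_4$ over $2$ may split in $L_{4,d}$; but every prime of $L_{4,d}$ above $2$ is totally ramified in $L_{\infty,d}/L_{4,d}$ (the ramification index over $2$ in $L_{n,d}$ is $2^{n-1}$ while it is $2^{3}$ in $L_{4,d}$), so both Fukuda's hypothesis and your norm-surjectivity step go through unchanged.
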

 	\begin{proof}
 		Suppose that $n\in \{4,5\}$. By Lemmas \ref{lm1 computations},  \ref{lm compo equiv 9 mod 16} and \ref{lm computations 7 pmod}, we have  $e_{n,d}=e_{4,d} $. By 
 		  Propositions \ref{Number of primes dividing d} and \ref{prop deco to 2} the number of prime divisors of 
 		$d$ in $K_n$ is the same. Then  $rank_2(Cl(L_{5,d}))=rank_2(Cl(L_{4,d}))$ (see Lemma \ref{ambiguous class number formula}). Hence, Lemma \ref{lm fukuda} completes the proof. 
 	\end{proof}

 	\section*{\textbf{Acknowledgment}}
 	I would like to express my gratitude to my  professor Abdelkader Zekhnini for his support and  remarks during the preparation of this paper. My thanks are also due to the referee for his/her careful reading 
 	   and  constructive comments.

 \end{document}